\newtheorem{theorem}{Theorem}
\newtheorem{construction}{Construction}
\title{Graceful coloring is computationally hard} 
\author[1]{Cyriac Antony}
\author[2]{Laavanya D.}
\author[3]{Devi Yamini S.}
\affil[1]{IIT Madras, Chennai, India}
\affil[2,3]{Vellore Institute of Technology, Chennai, India}
\date{ }
\begin{document}
\maketitle
\begin{abstract}
Given a (proper) vertex coloring \( f \) of a graph \( G \), say \( f\colon V(G)\to \mathbb{N} \), the \emph{difference edge labelling} induced by \( f \) is a function \( h\colon E(G)\to \mathbb{N} \) defined as \( h(uv)=|f(u)-f(v)| \) for every edge \( uv \) of \( G \). 
A \emph{graceful coloring} of \( G \) is a vertex coloring \( f \) of \( G \) such that the difference edge labelling \( h\) induced by \( f \) is a (proper) edge coloring of \( G \). 
A graceful coloring with range \( \{1,2,\dots,k\} \) is called a graceful \( k \)-coloring. 
The least integer \( k \) such that \( G \) admits a graceful \( k \)-coloring is called the \emph{graceful chromatic number} of \( G \), denoted by \( \chi_g(G) \).

We prove that \( \chi(G^2)\leq \chi_g(G)\leq a(\chi(G^2)) \) for every graph \( G \), where \( a(n) \) denotes the \( n \)th term of the integer sequence A065825 in OEIS. 
We also prove that graceful coloring problem is NP-hard for planar bipartite graphs, regular graphs and 2-degenerate graphs. 
In particular, we show that for each \( k\geq 5 \), it is NP-complete to check whether a planar bipartite graph of maximum degree \( k-2 \) is graceful \( k \)-colorable. 
The complexity of checking whether a planar graph is graceful 4-colorable remains open. 
\end{abstract}


\section{Introduction}
Many branches of mathematics started out as problems in recreational mathematics which are easy to understand, yet challenging to solve. 
The story of graph theory is no different. 
The innocuous problem of coloring maps using only 4 colors gave birth to a thriving area of graph theory named graph coloring. 
The notion of graph labelling is a generalisation of graph coloring. 
Graph labelling is an area of immense theoretical interest and diverse practical applications, evident from Gallian's dynamic survey~\cite{gallian}. 
Similar to how attempts to prove the four color conjecture lead to the historical origin or popularity of the area of graph colorings, study of graceful labelling and harmonious labelling lead to the boom of the area of graph labellings~\cite{gallian}. 

The definition of graceful labelling requires the notion of difference edge labelling induced by a vertex labelling. 
Given a vertex labelling \( f \) of a graph \( G \), say \mbox{\( f\colon V(G)\to \mathbb{N} \)}, the \emph{difference edge labelling induced by \( f \)} is a function \( h\colon E(G)\to \mathbb{N}\cup \{0\} \) defined as \( h(uv)=|f(u)-f(v)| \) for every edge \( uv \) of \( G \). 
A \emph{graceful labelling} of a graph \( G \) on \( m \) edges is an injection \( f\colon V(G)\to \{0,1,\dots,m\} \) such that the difference edge labelling \( h \) induced by \( f \) is an injection from \( E(G) \) to \( \{1,2,\dots,m\} \)~\cite{gallian}. 

The most popular problem on graceful labelling is settling the infamous Kötzig-Ringel-Rosa conjecture, better known as the graceful tree conjecture, which states that all trees are graceful. 
The graceful tree conjecture is far from resolved till date. 
Hence, the practical approaches to the problem includes resolving the conjecture for subclasses of trees on one hand, and resolving weaker versions of the conjecture on the other hand. 
One way to produce notions weaker than graceful labelling is to impose restrictions locally on vertex neighbourhoods rather than globally. 
This produces the notion of graceful coloring. 
A vertex labelling \( f \) of a graph \( G \), say \( f\colon V(G)\to \mathbb{N} \), is a \emph{graceful coloring} of \( G \) if (i)~\( f \) is an injection when limited to each vertex neighbourhood, and (ii)~the induced difference edge labelling \( h \) is an injection when limited to each vertex neighbourhood; formally, the restriction \( f_{\restriction N[v]} \) is an injection and \( h_{\restriction G[N[v]]} \) is an injection for the closed neighbourhood \( N[v] \) of each vertex \( v \) in \( G \). 
In other words, a graceful coloring of \( G \) is a (proper vertex) coloring \( f \) of \( G \) such that the difference edge labelling \( h\) induced by \( f \) is a (proper) edge coloring of \( G \)~\cite{bi_byers} (see Figure~\ref{fig:eg graceful coloring} for an example). 
The graceful coloring first appeared in Bi et al.~\cite{bi_byers}, and was studied in more detail in Byers' thesis~\cite{byers}. 

\begin{figure}[hbtp] 
	\centering
\tikzset{every picture/.style={line width=0.75pt}} 
\begin{tikzpicture}[x=0.75pt,y=0.75pt,yscale=-1,xscale=1]

\draw   (40,25) .. controls (40,19.48) and (44.48,15) .. (50,15) .. controls (55.52,15) and (60,19.48) .. (60,25) .. controls (60,30.52) and (55.52,35) .. (50,35) .. controls (44.48,35) and (40,30.52) .. (40,25) -- cycle ;
\draw   (139,28) .. controls (139,22.48) and (143.48,18) .. (149,18) .. controls (154.52,18) and (159,22.48) .. (159,28) .. controls (159,33.52) and (154.52,38) .. (149,38) .. controls (143.48,38) and (139,33.52) .. (139,28) -- cycle ;
\draw   (140,128) .. controls (140,122.48) and (144.48,118) .. (150,118) .. controls (155.52,118) and (160,122.48) .. (160,128) .. controls (160,133.52) and (155.52,138) .. (150,138) .. controls (144.48,138) and (140,133.52) .. (140,128) -- cycle ;
\draw   (40,127) .. controls (40,121.48) and (44.48,117) .. (50,117) .. controls (55.52,117) and (60,121.48) .. (60,127) .. controls (60,132.52) and (55.52,137) .. (50,137) .. controls (44.48,137) and (40,132.52) .. (40,127) -- cycle ;
\draw    (56,32) -- (142.5,121) ;
\draw    (60,127) -- (140,128) ;
\draw    (60,26) -- (140,27) ;
\draw    (149,38) -- (150,118) ;
\draw    (50,36) -- (51,116) ;
\draw    (160,127) -- (240,128) ;
\draw   (240,127) .. controls (240,121.48) and (244.48,117) .. (250,117) .. controls (255.52,117) and (260,121.48) .. (260,127) .. controls (260,132.52) and (255.52,137) .. (250,137) .. controls (244.48,137) and (240,132.52) .. (240,127) -- cycle ;
\draw    (159,28) -- (245.5,117) ;

\draw (145.5,123) node [anchor=north west][inner sep=0.75pt]  [font=\footnotesize] [align=left] {1};
\draw (245.5,121) node [anchor=north west][inner sep=0.75pt]  [font=\footnotesize] [align=left] {3};
\draw (45,19) node [anchor=north west][inner sep=0.75pt]  [font=\footnotesize] [align=left] {2};
\draw (145.5,22) node [anchor=north west][inner sep=0.75pt]  [font=\footnotesize] [align=left] {4};
\draw (153,65) node [anchor=north west][inner sep=0.75pt]  [font=\footnotesize] [align=left] {3};
\draw (100,114) node [anchor=north west][inner sep=0.75pt]  [font=\footnotesize] [align=left] {4};
\draw (99,12) node [anchor=north west][inner sep=0.75pt]  [font=\footnotesize] [align=left] {2};
\draw (205,57) node [anchor=north west][inner sep=0.75pt]  [font=\footnotesize] [align=left] {1};
\draw (102,65) node [anchor=north west][inner sep=0.75pt]  [font=\footnotesize] [align=left] {1};
\draw (46,122) node [anchor=north west][inner sep=0.75pt]  [font=\footnotesize] [align=left] {5};
\draw (38,64) node [anchor=north west][inner sep=0.75pt]  [font=\footnotesize] [align=left] {3};
\draw (193,113) node [anchor=north west][inner sep=0.75pt]  [font=\footnotesize] [align=left] {2};

\end{tikzpicture}
\caption{A graceful coloring of a graph. This is a graceful 5-coloring.}
\label{fig:eg graceful coloring}
\end{figure}
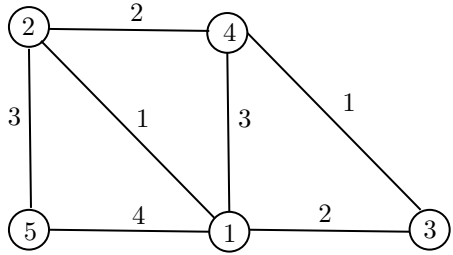

For \( k\in \mathbb{N} \), a \emph{graceful \( k \)-coloring} of \( G \) is a graceful coloring of \( G \) with range \( \{1,2,\dots,k\} \) (i.e., \( f\colon V(G)\to \{1,2,\dots,k \} \)). 
The least integer \( k \) such that \( G \) admits a graceful \( k \)-coloring is called the \emph{graceful chromatic number} of \( G \), denoted by \( \chi_g(G) \). 

It is easy to observe that under a graceful coloring, no two neighbours of a vertex can get the same color. 
A (proper) coloring of a graph \( G \) with this property is called a distance-two coloring. 
Hence, every graceful coloring is a distance-two coloring, but the converse is not true. 
The least number of colors required to produce a distance-two coloring of a graph \( G \) is called the distance-two chromatic number of \( G \), and is equivalent to the chromatic number of the square graph \( G^2 \). 
We denote the distance-two chromatic number of \( G \) by \( \chi(G^2) \). 
Obviously, \( \chi(G^2)\leq \chi_g(G) \).

We relate the graceful chromatic number of complete graphs to integer sequences. 
Throughout this paper, \( a(n) \) denotes the \( n \)th term of the integer sequence A065825 in OEIS~\cite{a065825-oeis}. 
It is known that \( \chi_g(K_n)=a(n) \)~\cite{laavanya_deviYamini}. 

In this paper, we prove that \( \chi(G^2)\leq \chi_g(G)\leq a(\chi(G^2)) \) for every graph \( G \). 
In addition, we prove that graceful coloring problem is NP-hard for planar bipartite graphs, regular graphs and 2-degenerate graphs. 
We show that (i)~for each \( k\geq 6 \), it is NP-complete to check whether a planar bipartite 3-degenerate graph of maximum degree \( k-2 \) is graceful \( k \)-colorable, (ii)~it is NP-complete to check whether a 3-regular 3\nobreakdash-connected planar bipartite graph is graceful 5-colorable, and (iii)~it is NP-complete to check whether a 2-degenerate graph of maximum degree 3 is graceful 4\nobreakdash-colorable. 
The complexity of checking whether a planar graph is graceful 4-colorable remains open. 

For brevity, we present overviews in the paper, and relegate details to the extended version of the paper. 

\section{Results}

\subsection{Graceful Coloring and Integer Sequences}
By definition, \( a(n) \) is the least integer \( k \) for which \( \{1,2,\dots,k\} \) contains a subset \( S \) of cardinality \( n \) such that no three distinct elements \( i,j,k \) of \( S \) satisfy \( |i-j|=|j-k| \). 
This imply the following. 
\begin{theorem}[\cite{laavanya_deviYamini}]
\( \chi_g(K_n)=a(n) \) for every positive integer \( n \).
\qed
\end{theorem}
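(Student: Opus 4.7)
The plan is to observe that the theorem is essentially a reformulation of the definition of \( a(n) \) once one unravels what a graceful coloring of \( K_n \) demands. Since every pair of vertices in \( K_n \) is adjacent, any proper vertex coloring must assign pairwise distinct colors. Hence a graceful \( k \)-coloring of \( K_n \) is nothing but a bijection \( f \) from \( V(K_n) \) onto an \( n \)-element subset \( S \subseteq \{1,2,\dots,k\} \), subject to the extra constraint coming from the induced edge labelling.

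Next I would translate the proper-edge-coloring condition on \( h \) into a property of \( S \). For any two edges \( uv \) and \( uw \) sharing the vertex \( u \), we have \( h(uv)=h(uw) \) iff \( |f(u)-f(v)|=|f(u)-f(w)| \). Since \( f(v)\neq f(w) \), this is equivalent to \( f(v)+f(w)=2f(u) \), i.e.\ to the three distinct elements \( f(v),f(u),f(w) \) of \( S \) forming a 3-term arithmetic progression with \( f(u) \) as its middle term. Thus \( h \) is a proper edge colouring iff \( S \) contains no three distinct elements \( i,j,k \) with \( |i-j|=|j-k| \), which is exactly the condition defining \( a(n) \).

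From this two-way translation both inequalities follow. For \( \chi_g(K_n)\leq a(n) \), take any \( n \)-element subset \( S \) of \( \{1,\dots,a(n)\} \) with no three elements in AP (which exists by definition of \( a(n) \)), assign its elements bijectively to \( V(K_n) \), and verify that the two conditions above are met. For \( \chi_g(K_n)\geq a(n) \), start from any graceful \( k \)-coloring of \( K_n \), note that its image is an \( n \)-element AP-free subset of \( \{1,\dots,k\} \), and conclude \( k\geq a(n) \) by minimality.

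There is no real obstacle; the statement is a tautological consequence of the definitions once the AP-midpoint interpretation of \( |i-j|=|j-k| \) is made explicit. This is presumably why the paper merely states the theorem, attributes it to \cite{laavanya_deviYamini}, and writes \emph{``This imply the following.''} just before it.
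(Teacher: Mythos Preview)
Your proposal is correct and matches the paper's approach: the paper gives no proof beyond the sentence ``By definition, \( a(n) \) is the least integer \( k \) for which \( \{1,2,\dots,k\} \) contains a subset \( S \) of cardinality \( n \) such that no three distinct elements \( i,j,k \) of \( S \) satisfy \( |i-j|=|j-k| \). This imply the following.'' Your argument is precisely the unpacking of that remark, and the midpoint characterisation \( |f(u)-f(v)|=|f(u)-f(w)| \iff f(v)+f(w)=2f(u) \) (given \( f(v)\neq f(w) \)) is exactly the link between the graceful edge condition and the AP-free condition that the paper leaves implicit.
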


If \( f \) is a distance-two \( q \)-coloring of a graph \( G \) and \( h \) is a graceful coloring of the complete graph \( K_q \) with vertex set \( \{1,2,\dots,q\} \), then \( h \circ f \) is a graceful coloring of \( G \). 
Thus, we have the following theorem. 
\begin{theorem}
\( \chi(G^2)\leq \chi_g(G)\leq a(\chi(G^2)) \) for every graph \( G \). 
\qed
\end{theorem}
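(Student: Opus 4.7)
The plan is to treat the two inequalities separately. The lower bound $\chi(G^2) \leq \chi_g(G)$ is already essentially argued in the introduction: a graceful coloring of $G$ assigns distinct values to every closed neighborhood $N[v]$, so it is in particular a distance-two coloring, and hence any graceful $k$-coloring yields a proper coloring of $G^2$ on $k$ colors. The upper bound $\chi_g(G) \leq a(\chi(G^2))$ is the content of the theorem and is what the preceding paragraph is gesturing at; I would formalize that construction.

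For the upper bound, I would set $q = \chi(G^2)$, fix any distance-two $q$-coloring $f\colon V(G)\to \{1,2,\dots,q\}$, and (using Theorem~1) fix a graceful $a(q)$-coloring $h\colon\{1,2,\dots,q\}\to\{1,2,\dots,a(q)\}$ of the complete graph $K_q$ whose vertex set is taken to be $\{1,2,\dots,q\}$. The candidate graceful coloring of $G$ is then the composition $g=h\circ f$, which clearly has range contained in $\{1,2,\dots,a(q)\}$.

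The remaining work is to verify that $g$ really is a graceful coloring of $G$, and this is the place where one must unwind the definitions, not a place where any obstacle occurs. For an edge $uv$ of $G$, $f(u)\neq f(v)$ because $f$ is proper, so $f(u)f(v)$ is an edge of $K_q$, and properness of $h$ then gives $g(u)\neq g(v)$. For two edges $uv$ and $uw$ incident at a common vertex $u$, the three values $f(u),f(v),f(w)$ are pairwise distinct because $v$ and $w$ lie in $N[u]$ and $f$ is a distance-two coloring; hence $f(u)f(v)$ and $f(u)f(w)$ are two distinct edges of $K_q$ meeting at $f(u)$, and the graceful property of $h$ on $K_q$ forces $|g(u)-g(v)|=|h(f(u))-h(f(v))|\neq|h(f(u))-h(f(w))|=|g(u)-g(w)|$. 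Thus $g$ is a graceful coloring of $G$ with at most $a(q)=a(\chi(G^2))$ colors, giving the desired bound. I would expect no real difficulty beyond being careful that the distance-two property of $f$ is exactly what guarantees the three labels $f(u),f(v),f(w)$ are distinct, which is precisely the hypothesis needed to invoke the gracefulness of $h$ at the triangle $f(u)f(v)f(w)$ of $K_q$.
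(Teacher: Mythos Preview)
Your proposal is correct and follows exactly the approach the paper indicates: the lower bound is the observation that every graceful coloring is a distance-two coloring, and the upper bound is obtained by composing a distance-two $q$-coloring $f$ of $G$ with a graceful $a(q)$-coloring $h$ of $K_q$, just as in the sentence preceding the theorem. You have simply spelled out the verification that $h\circ f$ is graceful, which the paper leaves implicit.
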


\subsection{Complexity of Graceful Coloring}
The problem \textsc{Graceful \( k \)-Colorability} is defined as follows. 
\begin{center}
\fbox {\begin{minipage}{0.6\textwidth}
\noindent \textsc{Graceful \( k \)-Colorability}\\
Instance: A graph \( G \).\\
Question: Does \( G \) admit a graceful \( k \)-coloring?
\end{minipage}}\\
\end{center}

First, we show that \textsc{Graceful \( k \)-colorablity} of planar graphs is NP-complete for all \( k\geq 5 \). 
As a prelude to the reduction, we point out the following. 
\begin{theorem}
A 3-reular graph \( G \) is graceful 5-colorable if and only if \( G \) is distance-two 4-colorable.
\qed
\end{theorem}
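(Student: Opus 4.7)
I would establish the biconditional by proving the two implications separately, with the forward direction relying on an explicit case analysis showing that the colour $3$ is never usable at a degree-$3$ vertex.

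For the forward direction, suppose $f$ is a graceful $5$-coloring of $G$. The plan is to show that no vertex $v$ can receive colour $3$. Indeed, if $f(v)=3$, then since $G$ is $3$-regular, the three neighbours of $v$ get three distinct colours from $\{1,2,4,5\}$ and the three edge labels $|3-f(u)|$ must be pairwise distinct. But a direct computation shows $|3-1|=|3-5|=2$ and $|3-2|=|3-4|=1$, so the multiset of possible edge-labels at $v$ is $\{1,1,2,2\}$, giving at most two distinct values among any three edges. This contradicts gracefulness. Hence $f$ uses only the colours $\{1,2,4,5\}$, so $f$ is in particular a distance-two $4$-coloring.

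For the converse, suppose $g\colon V(G)\to\{1,2,3,4\}$ is a distance-two coloring of $G$. I would rename the colours by the bijection $1\mapsto 1,\; 2\mapsto 2,\; 3\mapsto 4,\; 4\mapsto 5$ to obtain a distance-two coloring $f\colon V(G)\to\{1,2,4,5\}$. The key observation is that for every $c\in\{1,2,4,5\}$, the three edge-labels arising from the three remaining colours are distinct: $c=1$ gives labels $\{1,3,4\}$, $c=2$ gives $\{1,2,3\}$, $c=4$ gives $\{1,2,3\}$, and $c=5$ gives $\{1,3,4\}$. Since $G$ is $3$-regular and $f$ is distance-two proper, the three neighbours of any vertex $v$ receive exactly the three colours of $\{1,2,4,5\}\setminus\{f(v)\}$, so the induced edge labels at $v$ are automatically distinct. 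Hence $f$ is a graceful $5$-coloring.

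The proof is essentially routine once one notices the obstruction at colour $3$; the only potentially delicate point is the verification that in the backward direction gracefulness comes for free, but this reduces to four tiny calculations and is not a real obstacle.
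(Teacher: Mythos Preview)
Your proof is correct and follows essentially the same approach as the paper: in the paper this theorem is the $k=5$ instance of Construction~1, whose guarantee is proved by exactly the observation you make---a vertex coloured $3$ can support at most two distinct edge labels (hence at most $k-3=2$ neighbours), so colour $3$ is forbidden at degree-$3$ vertices, and conversely a distance-two colouring with palette $\{1,2,4,5\}$ is automatically graceful. Your explicit tabulation of the four cases in the backward direction is more detailed than the paper's overview, but the underlying argument is identical.
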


\begin{construction}\label{constr:1}
~\\
\emph{Parameter:} An integer \( k\geq 5 \) (not part of input). \\
\emph{Input:} A 3-regular graph \( G \).\\
\emph{Output:} A graph \( G' \) of maximum degree \( k-2 \).\\
\emph{Guarantee:} \( G' \) is graceful \( k \)-colorable if and only if \( G \) is distance-two 4-colorable.\\
\emph{Steps:} Introduce a copy of \( G \). 
Attach \( k-5 \) leaf vertices at each vertex of the copy of~\( G \). 
\end{construction}
\begin{proof}[Proof of Guarantee (overview)]
Suppose that \( G' \) is graceful \( k \)-colorable. 
That is, \( G' \) admits a graceful \( k \)-coloring \( f'\colon V(G')\to \{1,2,\dots,k\} \). 
If \( v \) is a vertex of \( G' \) colored 3 by \( f' \), then \( v \) has at most \( k-3 \) neighbours in \( G' \). 
As a result, no non-leaf vertex \( v \) of \( G' \) can get color~3 (because \( d_{G'}(v)=k-2 \)). 
Similarly, each non-leaf vertex \( v \) of \( G' \) cannot get any of the colors \( 4,5,\dots,k-2 \), and thus \( v \) can get only the colors \( 1,2,k-1 \) or \( k \) under \( f \).  
Hence, the restriction of \( f \) to \( V(G) \) is a graceful coloring and in particular a distance-two coloring of \( G \) that uses only 4 colors (namely, \( 1,2,k-1 \) and \( k \)). 
Therefore, \( G \) is distance-two 4-colorable. 

Conversely, suppose that \( G \) is distance-two 4-colorable. 
Then, there exists a distance-two 4-coloring \( f \) of \( G \) with color paletter \( \{1,2,k-1,k\} \) (i.e., \( f\colon V(G)\to \{1,2,k-1,k\} \)). 
We show that \( f \) can be extended into a graceful \( k \)-coloring \( f' \) of \( G' \). 
For each non-leaf vertex \( v \) of \( G' \), define \( f'(v)=f(v) \), and color the leaf neighbours of \( v \) in \( G' \) as follows: if \( f'(v)=2 \), then color the leaf neighbours of \( v \) with colors \( \{4,5,\dots,k-2\} \) in a bijective fashion; if \( f'(v)=k-1 \), then color the leaf neighbours of \( v \) with colors \( \{3,4,\dots,k-3\} \) in a bijective fashion; if \( f'(v)\in\{1,k\} \), then color the leaf neighbours of \( v \) with distinct colors from \( \{3,4,\dots,k-2\} \). 
We complete the proof by showing that \( f' \) is a graceful \( k \)-coloring of \( G' \).
%
\end{proof}
Observe that for \( k=5 \), the output graph in Construction~\ref{constr:1} is the same as the input graph (i.e., \( G'=G \)). 
Moreover, the construction obviously takes only time polynomial in the input size. 
Also, observe that Construction~\ref{constr:1} preserves planarity and bipartiteness. 
Further, Feder, Hell and Subi~\cite{feder} proved that it is NP-complete to check whether a 3-regular 3-connected planar bipartite graph is distance-two 4\nobreakdash-colorable. 
Thanks to Construction~\ref{constr:1}, this imply the following. 
\begin{theorem}
\textsc{Graceful 5-colorablity} is NP-complete for 3-regular 3-connected planar bipartite graphs. 
\qed 
\end{theorem}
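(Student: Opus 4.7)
The plan is to combine the two ingredients already laid out in the excerpt: the structural equivalence between graceful 5-colorability and distance-two 4-colorability for 3-regular graphs, and the known NP-completeness result of Feder, Hell and Subi for distance-two 4-colorability on the target graph class.

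First I would dispatch membership in NP. Given a function \( f\colon V(G)\to\{1,\dots,5\} \), one can verify in polynomial time that \( f \) is a proper vertex coloring and that the induced edge labelling \( h(uv)=|f(u)-f(v)| \) is proper, so \textsc{Graceful 5-Colorability} lies in NP for any graph class.

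For hardness, I would reduce from the problem of deciding whether a 3-regular 3-connected planar bipartite graph is distance-two 4-colorable, which is NP-complete by Feder, Hell and Subi~\cite{feder}. The reduction is simply the identity map: given such an instance \( G \), output \( G \) itself. This is obviously polynomial-time computable and preserves 3-regularity, 3-connectivity, planarity and bipartiteness. By the earlier theorem of the section (the equivalence for 3-regular graphs, which is also the \( k=5 \) specialisation of Construction~\ref{constr:1}, since for \( k=5 \) the construction leaves \( G \) unchanged), \( G \) is graceful 5-colorable if and only if \( G \) is distance-two 4-colorable. Hence the identity reduction is a valid many-one reduction and NP-hardness transfers.

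There is essentially no obstacle here beyond assembling the pieces; the substantive content was front-loaded into the equivalence theorem for 3-regular graphs and into the Feder--Hell--Subi reduction. The only subtlety worth flagging explicitly in the write-up is that Construction~\ref{constr:1} with parameter \( k=5 \) attaches \( k-5=0 \) leaves per vertex, so the output graph coincides with the input; this is what allows the class 3-regular 3-connected planar bipartite to be preserved under the reduction, rather than merely the weaker class guaranteed by the general \( k\geq 6 \) case.
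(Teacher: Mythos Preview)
Your proposal is correct and matches the paper's own argument essentially verbatim: the paper likewise observes that for \( k=5 \) Construction~\ref{constr:1} is the identity, invokes the Feder--Hell--Subi NP-completeness of distance-two 4-colorability on 3-regular 3-connected planar bipartite graphs, and concludes via the equivalence theorem. There is nothing to add or correct.
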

\begin{theorem}
For \( k\geq 6 \),  \textsc{Graceful \( k \)-colorablity} is NP-complete for planar bipartite graphs of maximum degree \( k-2 \). 
\qed 
\end{theorem}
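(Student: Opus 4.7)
The plan is to derive this theorem from the previous material essentially for free: Construction~\ref{constr:1} is already a polynomial-time reduction, so all that is left is to verify membership in NP and to check that the target graph class is correctly identified. Membership in NP is immediate, since a graceful \( k \)-coloring is a labelling of the vertices that can be verified in polynomial time by checking properness on vertex pairs and properness of the induced difference labelling on edge pairs sharing a vertex.

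For NP-hardness, I would start from the Feder--Hell--Subi result: deciding whether a 3-regular 3-connected planar bipartite graph \( G \) is distance-two 4-colorable is NP-complete. Given such a \( G \), apply Construction~\ref{constr:1} with parameter \( k\geq 6 \) to obtain the graph \( G' \). Since the construction only attaches \( k-5 \) leaf vertices to each vertex of \( G \), it runs in polynomial time and the guarantee gives: \( G' \) is graceful \( k \)-colorable if and only if \( G \) is distance-two 4-colorable.

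Next I would check that \( G' \) falls in the advertised graph class. Attaching leaves preserves planarity and bipartiteness, so \( G' \) is planar bipartite. Every non-leaf vertex of \( G' \) has degree \( 3+(k-5)=k-2 \), while leaves have degree 1, so the maximum degree of \( G' \) is exactly \( k-2 \). Hence the reduction outputs an instance of \textsc{Graceful \( k \)-Colorability} restricted to planar bipartite graphs of maximum degree \( k-2 \).

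There is no real obstacle here beyond making sure the bookkeeping is consistent; the substantive work has already been absorbed into the guarantee of Construction~\ref{constr:1} and into the Feder--Hell--Subi source problem. One point worth double-checking is the \( k\geq 6 \) hypothesis: for \( k=5 \) the construction would add zero leaves, leaving \( G'=G \), which recovers the preceding theorem but does not produce a graph of smaller maximum degree than \( G \); for \( k\geq 6 \) we indeed get at least one leaf per original vertex, and the maximum degree strictly exceeds 3 as claimed. Combining NP membership, the polynomial reduction, and the class-preservation observations completes the proof.
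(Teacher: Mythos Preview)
Your proposal is correct and follows exactly the paper's intended approach: invoke Construction~\ref{constr:1} as a polynomial-time reduction from the Feder--Hell--Subi distance-two 4-colorability problem, observe that the construction preserves planarity and bipartiteness, and compute the resulting maximum degree as \( k-2 \). The paper records this theorem with a bare \(\qed\) after stating that Construction~\ref{constr:1} and the Feder--Hell--Subi result together imply it, so your write-up is in fact more explicit than the paper's.
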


Next, we show that \textsc{Graceful 4-colorablity} is NP-complete by reducing from the following problem. 

\begin{center}
\fbox {\begin{minipage}{33em}
\noindent \textsc{Positive Not-All-Equal $3$-Sat E4}\\
\emph{Instance:} A set \( X \) of variables and a set \( C \) of clauses over \( X \) are specified, where each clause \( c\in C \) consists of three distinct variables, and each variable appears in exactly four clauses (the formula contains no negations).\\
\emph{Question:} Does there exist a truth assignment for \( X \) such that each clause contains at least one true and at least one false literal?
\end{minipage}}
\end{center}

Darmann and D{\"o}cker~\cite{darmann_docker} demonstrated the NP-Completeness of this problem. 

\begin{figure}[hbtp]
\centering
\includegraphics{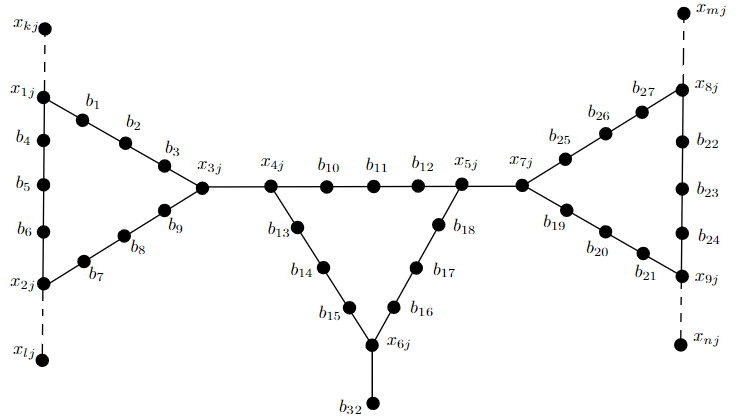}
\caption{Variable gadget \( G^X \).  
Vertex \( x_j \) together with incident edges are replaced by this gadget, and the edges \( e_1,e_2,e_3 \) and \( e_4 \) incident on \( x_j \) in \( G_\mathcal{F} \) become the dashed edges of the gadget in \( G \).}
\label{fig:4graceful variable gadget}
\end{figure}

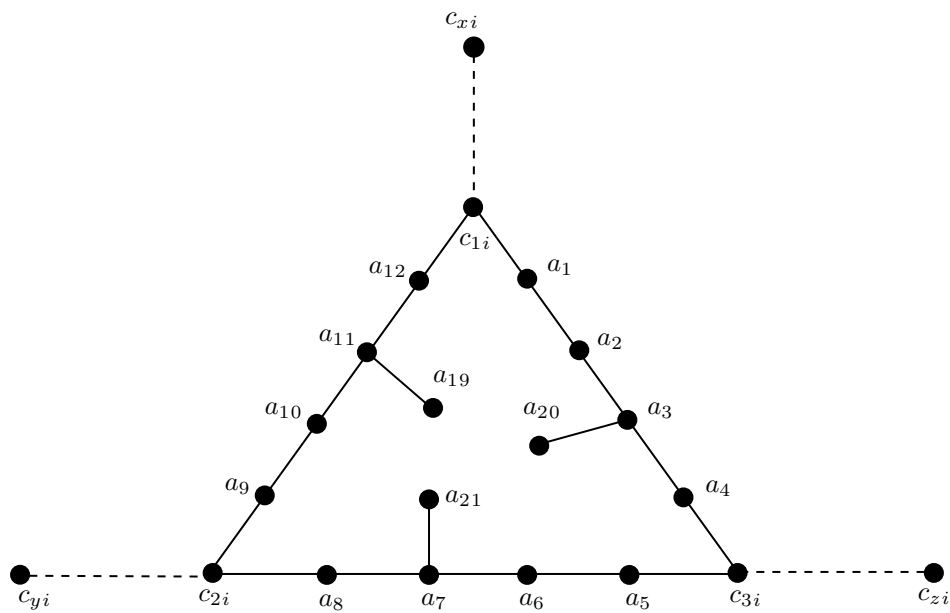
\begin{figure}[hbtp]
	\centering
\tikzset{every picture/.style={line width=0.75pt}} 

\begin{tikzpicture}[x=0.75pt,y=0.75pt,yscale=-1,xscale=1]

\draw   (339.88,65) -- (472.26,250.02) -- (207.5,250.02) -- cycle ;
\draw  [fill={rgb, 255:red, 0; green, 0; blue, 0 }  ,fill opacity=1 ] (467,249.5) .. controls (467,247.01) and (469.01,245) .. (471.5,245) .. controls (473.99,245) and (476,247.01) .. (476,249.5) .. controls (476,251.99) and (473.99,254) .. (471.5,254) .. controls (469.01,254) and (467,251.99) .. (467,249.5) -- cycle ;
\draw  [fill={rgb, 255:red, 0; green, 0; blue, 0 }  ,fill opacity=1 ] (413,250.5) .. controls (413,248.01) and (415.01,246) .. (417.5,246) .. controls (419.99,246) and (422,248.01) .. (422,250.5) .. controls (422,252.99) and (419.99,255) .. (417.5,255) .. controls (415.01,255) and (413,252.99) .. (413,250.5) -- cycle ;
\draw  [fill={rgb, 255:red, 0; green, 0; blue, 0 }  ,fill opacity=1 ] (362,250.5) .. controls (362,248.01) and (364.01,246) .. (366.5,246) .. controls (368.99,246) and (371,248.01) .. (371,250.5) .. controls (371,252.99) and (368.99,255) .. (366.5,255) .. controls (364.01,255) and (362,252.99) .. (362,250.5) -- cycle ;
\draw  [fill={rgb, 255:red, 0; green, 0; blue, 0 }  ,fill opacity=1 ] (313,250.5) .. controls (313,248.01) and (315.01,246) .. (317.5,246) .. controls (319.99,246) and (322,248.01) .. (322,250.5) .. controls (322,252.99) and (319.99,255) .. (317.5,255) .. controls (315.01,255) and (313,252.99) .. (313,250.5) -- cycle ;
\draw  [fill={rgb, 255:red, 0; green, 0; blue, 0 }  ,fill opacity=1 ] (262,250.5) .. controls (262,248.01) and (264.01,246) .. (266.5,246) .. controls (268.99,246) and (271,248.01) .. (271,250.5) .. controls (271,252.99) and (268.99,255) .. (266.5,255) .. controls (264.01,255) and (262,252.99) .. (262,250.5) -- cycle ;
\draw  [fill={rgb, 255:red, 0; green, 0; blue, 0 }  ,fill opacity=1 ] (205,249.5) .. controls (205,247.01) and (207.01,245) .. (209.5,245) .. controls (211.99,245) and (214,247.01) .. (214,249.5) .. controls (214,251.99) and (211.99,254) .. (209.5,254) .. controls (207.01,254) and (205,251.99) .. (205,249.5) -- cycle ;
\draw  [fill={rgb, 255:red, 0; green, 0; blue, 0 }  ,fill opacity=1 ] (231,210.5) .. controls (231,208.01) and (233.01,206) .. (235.5,206) .. controls (237.99,206) and (240,208.01) .. (240,210.5) .. controls (240,212.99) and (237.99,215) .. (235.5,215) .. controls (233.01,215) and (231,212.99) .. (231,210.5) -- cycle ;
\draw  [fill={rgb, 255:red, 0; green, 0; blue, 0 }  ,fill opacity=1 ] (257,174.5) .. controls (257,172.01) and (259.01,170) .. (261.5,170) .. controls (263.99,170) and (266,172.01) .. (266,174.5) .. controls (266,176.99) and (263.99,179) .. (261.5,179) .. controls (259.01,179) and (257,176.99) .. (257,174.5) -- cycle ;
\draw  [fill={rgb, 255:red, 0; green, 0; blue, 0 }  ,fill opacity=1 ] (282,138.5) .. controls (282,136.01) and (284.01,134) .. (286.5,134) .. controls (288.99,134) and (291,136.01) .. (291,138.5) .. controls (291,140.99) and (288.99,143) .. (286.5,143) .. controls (284.01,143) and (282,140.99) .. (282,138.5) -- cycle ;
\draw  [fill={rgb, 255:red, 0; green, 0; blue, 0 }  ,fill opacity=1 ] (308,102.5) .. controls (308,100.01) and (310.01,98) .. (312.5,98) .. controls (314.99,98) and (317,100.01) .. (317,102.5) .. controls (317,104.99) and (314.99,107) .. (312.5,107) .. controls (310.01,107) and (308,104.99) .. (308,102.5) -- cycle ;
\draw  [fill={rgb, 255:red, 0; green, 0; blue, 0 }  ,fill opacity=1 ] (335,65.5) .. controls (335,63.01) and (337.01,61) .. (339.5,61) .. controls (341.99,61) and (344,63.01) .. (344,65.5) .. controls (344,67.99) and (341.99,70) .. (339.5,70) .. controls (337.01,70) and (335,67.99) .. (335,65.5) -- cycle ;
\draw  [fill={rgb, 255:red, 0; green, 0; blue, 0 }  ,fill opacity=1 ] (362,101.5) .. controls (362,99.01) and (364.01,97) .. (366.5,97) .. controls (368.99,97) and (371,99.01) .. (371,101.5) .. controls (371,103.99) and (368.99,106) .. (366.5,106) .. controls (364.01,106) and (362,103.99) .. (362,101.5) -- cycle ;
\draw  [fill={rgb, 255:red, 0; green, 0; blue, 0 }  ,fill opacity=1 ] (388,137.5) .. controls (388,135.01) and (390.01,133) .. (392.5,133) .. controls (394.99,133) and (397,135.01) .. (397,137.5) .. controls (397,139.99) and (394.99,142) .. (392.5,142) .. controls (390.01,142) and (388,139.99) .. (388,137.5) -- cycle ;
\draw  [fill={rgb, 255:red, 0; green, 0; blue, 0 }  ,fill opacity=1 ] (412,172.5) .. controls (412,170.01) and (414.01,168) .. (416.5,168) .. controls (418.99,168) and (421,170.01) .. (421,172.5) .. controls (421,174.99) and (418.99,177) .. (416.5,177) .. controls (414.01,177) and (412,174.99) .. (412,172.5) -- cycle ;
\draw  [fill={rgb, 255:red, 0; green, 0; blue, 0 }  ,fill opacity=1 ] (440,211.5) .. controls (440,209.01) and (442.01,207) .. (444.5,207) .. controls (446.99,207) and (449,209.01) .. (449,211.5) .. controls (449,213.99) and (446.99,216) .. (444.5,216) .. controls (442.01,216) and (440,213.99) .. (440,211.5) -- cycle ;
\draw[dashed] (339.88,65)--++(0,-80) node[circle,fill,inner sep=0pt,minimum size=8pt]{};
\draw  [fill={rgb, 255:red, 0; green, 0; blue, 0 }  ,fill opacity=1 ] (565,249.5) .. controls (565,247.01) and (567.01,245) .. (569.5,245) .. controls (571.99,245) and (574,247.01) .. (574,249.5) .. controls (574,251.99) and (571.99,254) .. (569.5,254) .. controls (567.01,254) and (565,251.99) .. (565,249.5) -- cycle ;
\draw[dashed]    (472.88,249) -- (569.5,249) ;
\draw  [fill={rgb, 255:red, 0; green, 0; blue, 0 }  ,fill opacity=1 ] (117.87,250.43) .. controls (117.85,252.91) and (115.83,254.92) .. (113.34,254.91) .. controls (110.86,254.9) and (108.85,252.87) .. (108.87,250.39) .. controls (108.88,247.9) and (110.9,245.9) .. (113.39,245.91) .. controls (115.87,245.92) and (117.88,247.94) .. (117.87,250.43) -- cycle ;
\draw[dashed]    (209.98,251.35) -- (113.36,250.91) ;
\draw    (286.5,138.5) -- (321.5,169) ;
\draw    (371.5,185) -- (416.5,172.5) ;
\draw    (317.5,211) -- (317.5,250.5) ;
\draw  [fill={rgb, 255:red, 0; green, 0; blue, 0 }  ,fill opacity=1 ] (313,212.5) .. controls (313,210.01) and (315.01,208) .. (317.5,208) .. controls (319.99,208) and (322,210.01) .. (322,212.5) .. controls (322,214.99) and (319.99,217) .. (317.5,217) .. controls (315.01,217) and (313,214.99) .. (313,212.5) -- cycle ;
\draw  [fill={rgb, 255:red, 0; green, 0; blue, 0 }  ,fill opacity=1 ] (368,185.5) .. controls (368,183.01) and (370.01,181) .. (372.5,181) .. controls (374.99,181) and (377,183.01) .. (377,185.5) .. controls (377,187.99) and (374.99,190) .. (372.5,190) .. controls (370.01,190) and (368,187.99) .. (368,185.5) -- cycle ;
\draw  [fill={rgb, 255:red, 0; green, 0; blue, 0 }  ,fill opacity=1 ] (315,166.5) .. controls (315,164.01) and (317.01,162) .. (319.5,162) .. controls (321.99,162) and (324,164.01) .. (324,166.5) .. controls (324,168.99) and (321.99,171) .. (319.5,171) .. controls (317.01,171) and (315,168.99) .. (315,166.5) -- cycle ;

\draw (331,77.4) node [anchor=north west][inner sep=0.75pt]  [font=\footnotesize]  {$c_{1}{}_{i}$};
\draw (201,256.4) node [anchor=north west][inner sep=0.75pt]  [font=\footnotesize]  {$c_{2}{}_{i}$};
\draw (110.87,256.79) node [anchor=north west][inner sep=0.75pt]  [font=\footnotesize]  {$c_{y}{}_{i}$};
\draw (466,256.9) node [anchor=north west][inner sep=0.75pt]  [font=\footnotesize]  {$c_{3}{}_{i}$};
\draw (560,255.9) node [anchor=north west][inner sep=0.75pt]  [font=\footnotesize]  {$c_{z}{}_{i}$};
\draw (324,-34) node [anchor=north west][inner sep=0.75pt]  [font=\footnotesize]  {$c_{x}{}_{i}$};
\draw (375,90.4) node [anchor=north west][inner sep=0.75pt]  [font=\footnotesize]  {$a_{1}$};
\draw (400,127.4) node [anchor=north west][inner sep=0.75pt]  [font=\footnotesize]  {$a_{2}$};
\draw (425,162.4) node [anchor=north west][inner sep=0.75pt]  [font=\footnotesize]  {$a_{3}$};
\draw (454,201.4) node [anchor=north west][inner sep=0.75pt]  [font=\footnotesize]  {$a_{4}$};
\draw (414,258.4) node [anchor=north west][inner sep=0.75pt]  [font=\footnotesize]  {$a_{5}$};
\draw (361,258.4) node [anchor=north west][inner sep=0.75pt]  [font=\footnotesize]  {$a_{6}$};
\draw (312,258.4) node [anchor=north west][inner sep=0.75pt]  [font=\footnotesize]  {$a_{7}$};
\draw (261,258.9) node [anchor=north west][inner sep=0.75pt]  [font=\footnotesize]  {$a_{8}$};
\draw (214,200.4) node [anchor=north west][inner sep=0.75pt]  [font=\footnotesize]  {$a_{9}$};
\draw (234,162.4) node [anchor=north west][inner sep=0.75pt]  [font=\footnotesize]  {$a_{1}{}_{0}$};
\draw (261,125.4) node [anchor=north west][inner sep=0.75pt]  [font=\footnotesize]  {$a_{1}{}_{1}$};
\draw (286,91.4) node [anchor=north west][inner sep=0.75pt]  [font=\footnotesize]  {$a_{1}{}_{2}$};
\draw (318,146.4) node [anchor=north west][inner sep=0.75pt]  [font=\footnotesize]  {$a_{1}{}_{9}$};
\draw (363,162.4) node [anchor=north west][inner sep=0.75pt]  [font=\footnotesize]  {$a_{2}{}_{0}$};
\draw (324,206.4) node [anchor=north west][inner sep=0.75pt]  [font=\footnotesize]  {$a_{2}{}_{1}$};

\end{tikzpicture}
\caption{Clause gadget \( G^{C} \). 
Vertex \( c_i \) together with incident edges are replaced by this gadget, and the edges \( e_1,e_2 \) and \( e_3 \) incident on \( c_i \) in \( G_\mathcal{F} \) become the dashed edges of the gadget in \( G \).}
\label{fig:4graceful clause gadget}
\end{figure}

\begin{theorem}
\textsc{Graceful 4-colorablity} is NP-complete for the class of 2-degenerate graphs of maximum degree 3. 
\end{theorem}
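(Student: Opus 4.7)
The plan is to reduce from \textsc{Positive Not-All-Equal 3-Sat E4}, mirroring the strategy already announced in the excerpt. Given an instance $\mathcal{F}$ with variable set $X$ and clause set $C$, first form the auxiliary bipartite graph $G_{\mathcal{F}}$ whose vertices are $X\cup C$ and whose edges join each clause to its three variables. Then obtain $G$ by replacing every variable vertex $x_j$ by a copy of the variable gadget $G^X$ (its four dashed edges playing the role of the four clause-incidences of $x_j$) and every clause vertex $c_i$ by a copy of the clause gadget $G^C$ (its three dashed edges playing the role of the three variable-incidences of $c_i$). The construction is clearly polynomial, and inspection of the two gadgets should show that the attachment vertices have internal degree $2$ while every other gadget vertex has degree at most $3$; this gives maximum degree $3$ in $G$. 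The $2$\nobreakdash-degeneracy can be read off by peeling the gadgets from their leaves inward in a fixed order, which I would record once as a recursive ``elimination ordering'' for each gadget.

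The heart of the argument is a small arithmetic observation. In a graceful $4$-coloring, a degree-$3$ vertex $v$ has three incident edges whose labels must be a permutation of $\{1,2,3\}$, while its three neighbours must carry pairwise distinct colors in $\{1,2,3,4\}$. A direct check shows this is realisable only when $f(v)\in\{1,4\}$ (for $f(v)\in\{2,3\}$ some required difference forces a color outside $\{1,2,3,4\}$). Consequently, every degree-$3$ vertex of $G$, and in particular every internal ``branching'' vertex of each gadget, carries color $1$ or color $4$. I would use this as the sole combinatorial lever for all subsequent case analyses.

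With that lever, the next step is to establish the two gadget lemmas. For the variable gadget, show that in every graceful $4$-coloring of $G^X$ the four dashed attachment vertices receive a common color from $\{1,4\}$; the proof is a short forcing argument that propagates the ``$\{1,4\}$ only'' constraint along the internal paths of $G^X$, while the remaining degree-$\le 2$ vertices are colored with values $2$ or $3$ to supply the missing differences. Conversely, exhibit explicit extensions of each of the two monochromatic choices into a full graceful $4$-coloring of $G^X$. For the clause gadget, show that a graceful $4$-coloring of $G^C$ extending a prescribed coloring of its three dashed attachment vertices exists if and only if those three colors, all in $\{1,4\}$, are not all equal; again the ``only if'' uses the degree-$3$ lever on the inner triangle-like structure (in particular on the vertices $a_1,\dots,a_{12}$ lining the triangle sides) to derive a contradiction from the monochromatic case, and the ``if'' is shown by producing explicit colorings for each of the six non-monochromatic patterns (up to the $\{1\}\leftrightarrow\{4\}$ swap and the permutation of the three attachment sides).

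Once the two gadget lemmas are in place, the reduction is immediate: interpret color $1$ as \emph{true} and color $4$ as \emph{false} at the attachment vertices of each $G^X$; the variable gadget then encodes a consistent truth value for $x_j$, and the clause gadget certifies exactly the NAE condition on its three variables. Combined with the routine observation that \textsc{Graceful 4-Colorability} is in NP (guess $f$ and check the two injectivity conditions on each closed neighbourhood), this completes the proof. The main obstacle I anticipate is the case analysis inside the clause gadget, which has many degree-$3$ vertices constrained to $\{1,4\}$ and thereby imposes a system of ``$\pm 1,\pm 2,\pm 3$'' difference constraints on the remaining free vertices; I would handle this by first locating a short cycle forced to alternate between $1$ and $4$, then observing that if the three attachment vertices are monochromatic this alternation cannot be completed consistently with the required distinct differences at some identified internal vertex.
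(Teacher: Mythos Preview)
Your proposal is correct and follows essentially the same approach as the paper: the same reduction from \textsc{Positive Not-All-Equal 3-Sat E4}, the same gadget-replacement construction on the incidence graph, and the same two gadget properties (the four variable attachment vertices agree on a color in $\{1,4\}$; the three clause attachment vertices lie in $\{1,4\}$ and are not all equal). The one thing you make explicit that the paper leaves implicit is the arithmetic lever that any degree-$3$ vertex in a graceful $4$-coloring must receive color $1$ or $4$; this is exactly the mechanism underlying the paper's stated properties~(i) and~(ii), so the arguments coincide.
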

\begin{proof}[Proof overview]
Given a boolean formula \( \mathcal{F}=(X,C) \) which is an instance of \textsc{Positive Not-All-Equal $3$-Sat E4}, we construct a graph \( G \) by taking the variable-clause incidence graph \( G_\mathcal{F} \) of the formula \( \mathcal{F} \), and then replacing each variable vertex \( x_j \) and incident edges by the variable gadget \( G^X \) (see Figure~\ref{fig:4graceful variable gadget}) and replacing each clause vertex \( c_i \) and incident edges by the clause gadget \( G^C \) (see Figure~\ref{fig:4graceful clause gadget}).

We observe that for every graceful 4-coloring \( f \) of \( G \),\\ 
(i)~\( f(c_{1i}), f(c_{2i}), f(c_{3i})\in \{1,4\} \) and not all equal for each \( i \), and\\
(ii)~\( f(x_{lj})=f(x_{kj})=f(x_{mj})=f(x_{nj})\in \{1,4\} \) for all \( j \).\\ 
Using these properties, we show that \( \mathcal{F} \) is a yes instance of \textsc{Positive Not-All-Equal $3$-Sat E4} whenever \( G \) admits a graceful 4-coloring \( f \). 
In the other direction, we show that if \( \mathcal{F} \) is a yes instance of \textsc{Positive Not-All-Equal $3$-Sat E4}, then a graceful 4-coloring of \( G \) can be obtained using the colorings schemes shown in Figure~\ref{fig:graceful4col of variable gadget} and Figure~\ref{fig:graceful4col of clause gadget}. 
Consider a truth assignment \( \mathcal{A} \) for the formula \( \mathcal{F} \) such that each clause contains a true variable and a false variable. 
If a variable \( x_j \) is true under \( \mathcal{A} \), then color the corresponding variable gadget \( G^X \) by the coloring \( h \) shown in Figure~\ref{fig:graceful4col of variable gadget}. 
If a variable \( x_j \) is false under \( \mathcal{A} \), then color the corresponding variable gadget \( G^X \) using the `opposite' colors of that shown in Figure~\ref{fig:graceful4col of variable gadget} (i.e., color each vertex \( v \) in the gadget using the color \( 5-h(v) \)). 
This can be extended to a graceful 4-coloring of \( G \) by coloring each clause gadget by one of the colorings in Figure~\ref{fig:graceful4col of clause gadget} or a suitable angular rotation of them. 
\end{proof}

\begin{figure}[hbtp]
\centering
\includegraphics[width=\textwidth]{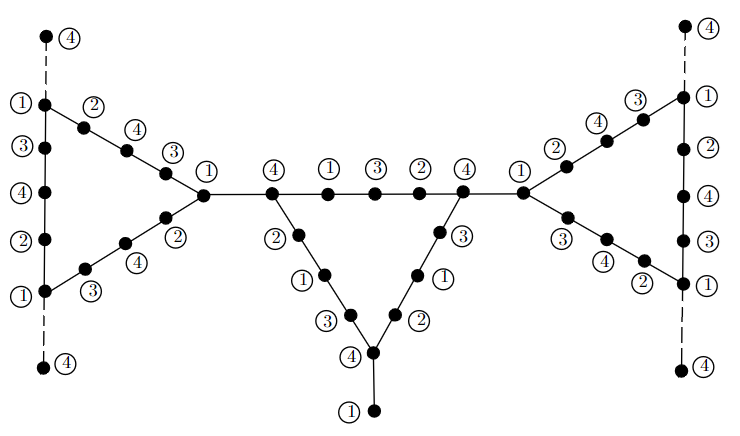}
\caption{A graceful 4-coloring \( h \) of \( G^{X} \).}
\label{fig:graceful4col of variable gadget}
\end{figure}

\begin{figure}[hbtp]
\centering
\includegraphics[width=\textwidth]{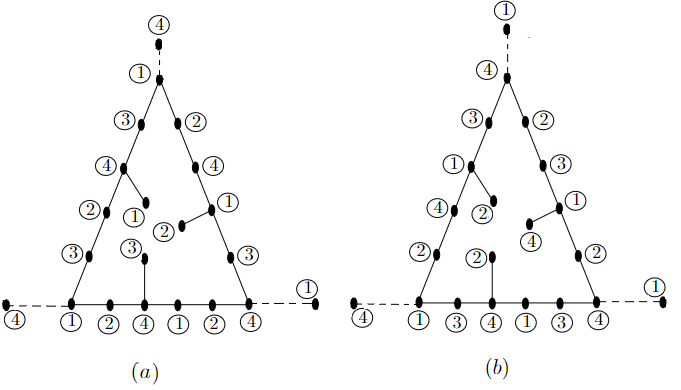}
\caption{Two graceful 4-colorings of \( G^C \)}
\label{fig:graceful4col of clause gadget}
\end{figure}

\FloatBarrier

\printbibliography

%
%
%
%
\end{document}